\newcommand{\bR}{\mathbb{R}}
\newcommand{\bN}{\mathbb{N}}
\newcommand{\cB}{\mathcal{B}}
\newcommand{\sfE}{{\sf E}}
\newcommand{\sfP}{{\sf P}}
\newcommand{\sfBF}{{\sf BF}}
\newcommand{\var}{\text{var}}
\newcommand{\cov}{\text{cov}}
\newcommand{\pa}{{\rm pa}}
\newcommand{\nd}{{\rm nd}}
\newcommand{\rt}{\right}
\newcommand{\lt}{\left}
\def\T{{ \mathrm{\scriptscriptstyle T} }}
\newtheorem{theorem}{Theorem}
\newtheorem{lemma}{Lemma}
\theoremstyle{definition}
\newtheorem{example}{Example}
\title{Consistent Bayesian causal discovery for\\ structural equation models with equal error variances%:\\ %a least-squares perspective
}
\author[1]{\href{mailto:<anamitra.chaudhuri@austin.utexas.edu>?Subject=Your UAI 2026 paper}{Anamitra~Chaudhuri}}
\author[1]{Yang~Ni}
\author[2]{Anirban~Bhattacharya}
\affil[1]{%
    Department of Statistics and Data Sciences\\
    The University of Texas at Austin\\
    Austin, Texas, USA
}
\affil[2]{%
    Department of Statistics\\
    Texas A\&M University\\
    College Station, Texas, USA
}
\begin{document}
\maketitle

\begin{abstract}
We consider the problem of recovering the true causal structure among a set of variables, generated by a linear acyclic structural equation model (SEM) with the error terms being independent, not necessarily Gaussian, and having equal variances. It is well-known that the true underlying directed acyclic graph (DAG) encoding the causal structure is uniquely identifiable under this assumption. Interestingly, in this setting, it further holds that  
the sum of minimum expected squared errors for every variable, while predicted by the best linear combination of its parent variables, is minimised if and only if the causal structure is represented by any supergraph of the true  DAG.
In this work, we propose a Bayesian DAG selection method, where the working model assumes Gaussian SEM with equal error variances, and employ independent $g$-priors on each set of SEM coefficients. Furthermore, we utilise the aforementioned key property to establish that the proposed method recovers the true graph consistently without any additional distributional assumption, and illustrate it with a simulation study.
\end{abstract}

\section{Introduction}

The field of causal discovery aims to learn the presence and direction of causal relationships, often from purely observational data, which enables the prediction of intervention outcomes when controlled experimentation is infeasible. This is critical in various scientific fields such as %epidemiology \cite{Petersen2024epi}, 
public health \citep{shen2020challenges}, %genomics \citep{choi2023model}, 
neuroscience \citep{zhou2023functional}, climate science \citep{runge2019inferring}, 
psychology \citep{ni2025causal}, 
philosophy \citep{glymour2019review}, economics \citep{imbens2004nonparametric}, and %Moreover, with Growing data complexity has also made causal discovery increasingly central 
to recent domains of machine learning and artificial intelligence, including causal representation learning \citep{scholkopf2021toward, zhang2024causal}, and causal transfer learning \citep{zhang2017transfer}. %algorithmic fairness \cite{zhang2018fairness}, and reinforcement learning \cite{bareinboim2015bandits}.

This paper considers the problem of learning causal structures from purely observational data within the framework of causal Bayesian networks, represented by directed acyclic graphs (DAGs) \citep{pearl2009causality}. In general, DAGs are identifiable only up to their Markov equivalence class, in which all DAGs encode the same conditional independencies \citep{heckerman1995learning}. 
Numerous methods have been proposed to 
estimate the Markov equivalence class, such as the Peter–Clark (PC) algorithm
\citep{spirtes2001causation}, and the Greedy Equivalence Search (GES) algorithm \citep{chickering2002optimal}; see \cite{drton2017structure} for a review. 
%Bayesian structure learning procedures \citep{madigan1996bayesian, friedman2003being, hoyer2009bayesian,shimizu2014bayesian,castelletti2018learning,zhou2023complexity} have also gained in prominence over the past two decades.  
% which have exhibited impressive empirical performance 
% not only evolve their computational efficiency over the past decades \citep{giudici2003improving, grzegorczyk2008improving, su2016improving, goudie2016gibbs, kuipers2017partition}, but also, in general, have significantly better empirical performance than the existing non-Bayesian methods \citep{hoyer2009bayesian, shimizu2014bayesian}.%, as observed under various simulation experiments.
An alternative prominent line of work along this direction over the past two decades is Bayesian structure learning, which leverages Markov chain Monte Carlo (MCMC) methods to traverse the model space and facilitates posterior inference on relevant quantities via model averaging; see, for instance, \cite{madigan1996bayesian, geiger2002parameter}.
%uncertainty quantification and flexible in incorporating prior information (e.g., some reference network, such as known biological pathway, that you want to shrink towards).
Bayesian methods are particularly appealing in this setting because they provide principled and coherent uncertainty quantification for both structural features and downstream quantities while explicitly accounting for model uncertainty, and at the same time offer substantial flexibility in incorporating prior information \citep{friedman2003being, castelletti2018learning}. For example, prior knowledge in the form of a reference network, such as a known biological pathway, can be encoded to encourage shrinkage toward scientifically plausible structures while still allowing the data to update and refine the model \citep{werhli2007reconstructing}.

% For this reason, enormous efforts have been made on developing efficient computation algorithms for Bayesian structure learning 

Notably, a series of recent work has demonstrated that the exact DAG, rather than its Markov equivalence class, can be uniquely identified from observational data under {\it additional} distributional assumptions. For example, if the causal relationships are represented by some structural equation model \citep{bollen1989structural}, then unique recovery of the DAG is possible when the structural equation model (SEM) is linear with all errors being non-Gaussian \citep{shimizu2006linear}.
Curiously, if the errors have {\it equal variance}, Gaussian or not, then exact identification is again possible \citep{peters2014identifiability, chen2019causal} -- this setting is the primary focus of this paper.
% For example, if the causal relationships are represented by some structural equation model (SEMs) \citep{bollen1989structural}, then unique recovery of the DAG is possible when the SEM is linear with equal error variances \citep{peters2014identifiability, chen2019causal}, and this is also the primary focus of this paper. %or non-linear with mild assumptions on the function and noise \citep{hoyer2008nonlinear, peters2011identifiability, peters2014causal}. Another notable framework is the linear non-Gaussian acyclic model (LiNGAM) \citep{shimizu2006linear}, which assumes linear functions and non-Gaussian errors to achieve unique identifiability.
% We use a crucial property of the Cholesky factors

Despite its growing popularity, Bayesian DAG structure learning has traditionally focused on developing efficient computational algorithms for Gaussian models \citep{giudici2003improving,
niinimaki2011partial, goudie2016gibbs, kuipers2017partition}, with far fewer contributions addressing general non-Gaussian cases \citep{hoyer2009bayesian, shimizu2014bayesian}. While extensive simulations in these works demonstrate, in general, superior performance over non-Bayesian methods across a wide range of non-Gaussian distributions, theoretically principled Bayesian selection frameworks with guarantees such as consistency are still lacking, especially under the equal error variance assumption, due to challenges including the asymptotic analysis of Bayes factors under model misspecification, and we address this research gap in this work.

Specifically, under this equal-variance assumption, %we prove in Theorem 1 
it has been established \citep{loh2014high, aragam2015learning}
that the {\it sum} of the {\it minimum expected squared errors} from linearly regressing each variable on its parents 
%predicted by the best linear predictor based on its parents, 
is minimized by any supergraph of the true data-generating DAG. We provide an alternative proof technique, where the key to establishing this result, namely Theorem \ref{thm:key_result}, lies in a regression formulation for the diagonal entries of the Cholesky factorization of a covariance matrix \citep{pourahmadi2007cholesky}.  
% crucial property of the Cholesky factorization of the covariance matrix of the variables -- the squared diagonal elements of the Cholesky factor represent the minimum expected squared errors, when each variable is predicted by the best linear predictor based on all its preceding variables \citep{pourahmadi2007cholesky}.
Theorem \ref{thm:key_result} has important implications which we carefully employ towards proposing our Bayesian structure learning method. Precisely, under a working Gaussian structural equation model with equal error variances, 
and assuming independent $g$-priors on each set of SEM coefficients, the marginal likelihood for each DAG involves an empirical version of the sum of least-squared errors. Consequently, the key result is utilized to establish posterior DAG selection consistency in Theorem \ref{thm:post_const}, contributing to a growing body of literature \citep{cao2019posterior, lee2019minimax,zhou2023complexity,chaudhuri2025consistent} on this topic. The results are further illustrated with simulation studies.

\paragraph*{Notations} We write $\bR$ for the set of real numbers and $\bN := {1,2,\dots}$ for that of the natural numbers, and for any $n \in \bN$, let $[n] := {1,2,\dots,n}$. A DAG is denoted by a pair $\gamma=(V,E)$ with $V=[p]$ the set of $p$ nodes and $E \subset V \times V$ the set of directed edges such that for $k, j \in V$, if there is a directed edge from node $k$ to node $j$, %i.e., $(k \to j) \in \gamma$, 
then $(k,j) \in E$, %, and we adopt this convention throughout. Moreover, if $(k,j) \in E$, 
in which case we call node $k$ a \emph{parent} of node $j$ in $\gamma$, and the set of its parents is subsequently denoted by $\pa^\gamma(j)$. Moreover, the total number of edges in $\gamma$ is represented by $|\gamma|$, and thus, $|\gamma| = \sum_{j = 1}^p |\pa^\gamma(j)|$. 
The collection of all DAGs with $p$ nodes is denoted by $\Gamma^p$.
For $\gamma' \in \Gamma^p$ with edge set $E'$, we 
%call $\gamma'$ a \emph{supergraph} of $\gamma$ 
write $\gamma' \supseteq \gamma$, with a slight abuse of notation,  if $E' \supseteq E$, i.e., every directed edge in $\gamma$ is present in $\gamma'$, or in other words, $\gamma'$ is a supergraph of $\gamma$. Finally, for any $x = (x_1, \ldots, x_p)^\T \in \bR^p$, and $I \subset [p]$, we denote by $x_I$ the subvector of $x$ consisting of the elements $x_k, k \in I$.

\section{Structural causal model}

We consider $p$ random variables $X_j$, $j \in [p]$, and assume that they are generated by a linear, recursive SEM associated with a data-generating true DAG $\gamma^*\in \Gamma^p$ with nodes $[p]$ corresponding to the random variables and edges $E^*$ representing their direct causal relationships: for $j,k \in [p]$, we have $(k, j) \in E^*$ when $X_k$ has a \emph{direct linear (causal) effect} on $X_j$. Acyclicity guarantees that there exists a permutation $c^*(\cdot)$ of $[p]$, which we call the \emph{causal order} of the variables, such that, for every $j \in [p]$, if the causal order of $X_j$ is $c^*(j)$, then $(k, j) \in E^*$ only if $c^*(k) < c^*(j)$. Equivalently, each node’s parents precede it in the causal order.
% Letting $\pa^*(j) \equiv \pa^{\gamma^*}(j)$ denote the parent set of node $j$ in $\gamma^*$ for every $j \in [p]$, the SEM assumes that 
% $X_{j}$ is some (unknown) linear function of $X_{k}, k \in \pa^*(j)$, plus an (unobserved) independent random error variable $\epsilon_{j}$,
For every $j\in [p]$, we let $\pa^*(j) \equiv \pa^{\gamma^*}(j)$ be the parent set of node $j$ in $\gamma^*$. %and by $X_{\pa^*(j)}$ we denote the subvector of $X = (X_1, \ldots, X_p)$ consisting of the random variables $X_k, k\in \pa^*(j)$.
Then the SEM posits that $X_j$ is some (unknown) linear function of $X_{\pa^*(j)}$ with an additive (unobserved) independent error $\epsilon_j$,
%plus an unobserved independent error $\epsilon_j$,
\begin{equation}\label{eq:model}
%\qquad X_{j} = \sum_{k \in {\rm pa}^*(j)} \beta^*_{jk} X_{k} + \epsilon_{j}
\qquad X_j = X_{\pa^*(j)}^\T \beta^*_j + \epsilon_j,\quad \epsilon_j \;\; {\buildrel\rm ind\over\sim} \;\; \sfP^*_j, \qquad j \in [p],
\end{equation}
where the elements in the (unknown) SEM coefficient vector $\beta^*_{j} \in \bR^{|\pa^*(j)|}$ are {\it non-zero} and quantify the direct causal effects of $X_k, k \in \pa^*(j)$, on $X_j$. Regarding the distributions of the errors $\sfP^*_j, j \in [p]$, we only assume that $E(\epsilon_j) = 0$, and $\var(\epsilon_j) = \sigma^2$ for every $j \in [p]$, i.e., the equal error variance assumption \citep{peters2014identifiability, chen2019causal}.
Moreover, due to the independence of the errors, $\sfP^*$, the joint probability distribution of the errors, admits the form $\sfP^* = \otimes_{j=1}^p \sfP^*_j$, which in turn induces the joint probability distribution of $X = (X_1, \ldots, X_p)^\T$ through \eqref{eq:model}.
We consider $n$ independent and identically distributed (iid) observations of $X$, denoted by $X^{(i)} = (X_{1}^{(i)}, \ldots, X_{p}^{(i)})^\T,
%(X_{1i}, X_{2i}, \dots, X_{pi}), 
\ i \in [n]$, and denote the complete dataset by $D_n := \{X^{(i)} : i \in [n]\}$ .
We illustrate the above in a concrete example below.
\begin{example}\label{ex:4node}
Consider $p = 4$ with $\gamma^*$ being the DAG as shown in Figure \ref{fig:4node}, 
 \begin{figure}[h]
\centering
\begin{tikzpicture}[->, >=stealth, thick]
    % Define nodes in an equilateral triangle
\node (3) [circle, draw, minimum size=1cm] at (1,1.732) {3}; 
\node (2) [circle, draw, minimum size=1cm] at (0,0) {2}; 
\node (1) [circle, draw, minimum size=1cm] at (2,0) {1}; 
\node (4) [circle, draw, minimum size=1cm] at (4,0) {4};
    % Draw directed edges
\draw (3) -- (2);
\draw (3) -- (1);
\draw (1) -- (4);
\draw (3) -- (4);
\end{tikzpicture}        
\caption{DAG $\gamma^*$}
\label{fig:4node}
\end{figure}
\noindent and let the associated data-generating SEM in \eqref{eq:model} take the following specific form: 
\begin{align*}
X_3 &= \epsilon_3,\\
X_2 &= 1.8 X_3 + \epsilon_2,\\
X_1 &= -2.7 X_3 + \epsilon_1,\\
X_4 &= 0.9 X_3 + 3.6 X_1 + \epsilon_4,
\end{align*} 
where the errors $\epsilon_j, j \in [4]$ are independent with mean $0$ and variance $\sigma^2 = 1.6$.
% distributions are: %$\epsilon_1, \epsilon_2, \epsilon_3$ are independent and
% \begin{align*}
% \epsilon_1 \sim \text{N}(0, 2.8), \quad \epsilon_2 \sim \text{Laplace}(0, 1), \quad \epsilon_3 \sim \text{t}_2, \quad \text{and} \quad \epsilon_4 \sim \frac{3}{4} \ \text{N}(0, 1) + \frac{1}{4} \ \text{N}(0, 4),
% \end{align*}
% with $\text{t}_2$ being the Student's t-distribution with degrees of freedom 2.
\end{example}

% We present below an interesting result of the aforementioned model, which will play a crucial role in designing a Bayesian hierarchical model to recover the true underlying DAG $\gamma^*$ consistently, as we will show later.
 
Interestingly, under the aforementioned model, if we minimise over $\Gamma^p$ the sum of nodewise minimum expected squared errors, obtained while predicting each variable with the best linear function of its parents, then the minimum is attained with any supergraph of $\gamma^*$, while the summands being equal to the common error variance $\sigma^2$ \citep{loh2014high, aragam2015learning}, as presented below. We denote by $\sfE_*[\cdot]$ the expectation under $\sfP^*$.
% For this, we exploit a crucial property of the Cholesky factorization of the covariance matrix of the variables that the squared diagonal elements of the Cholesky factors represent the minimum expected squared prediction errors, when every variable is predicted by the best linear predictor based on all its preceding variables \citep{pourahmadi2007cholesky}.

%\section{An important result}
\begin{theorem}\label{thm:key_result}
For every $\gamma \in \Gamma^p$, let $r^\gamma := \sum_{j = 1}^p r_j^\gamma$, where
\[
r_j^\gamma := \min_{\beta_j} \ \sfE_*(X_j - X_{\pa^\gamma(j)}^\T\beta_j)^2, \quad j \in [p].
\]
In particular, when $\gamma = \gamma^*$, we denote the above quantities by $r^*_j, j \in [p]$, and let $r^* := \sum_{j = 1}^p r^*_j$. 
    Then we have $r^\gamma \geq r^*$,
    %\[
    %\sum_{j = 1}^p r_j^\gamma \geq \sum_{j = 1}^p r_j^*,
    %\]
    where the equality holds if and only if $\gamma \supseteq \gamma^*$.
\end{theorem}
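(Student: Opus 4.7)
\emph{Lower bound.} The plan is to first observe that $\epsilon_j$ is independent of $X_{\pa^*(j)}$, since the latter is a function of errors of $\gamma^*$-ancestors of $j$, all independent of $\epsilon_j$; hence $r^*_j$ is attained at $\beta_j = \beta^*_j$ and equals $\sigma^2$, so $r^* = p\sigma^2$. For a general $\gamma$, I would fix a topological order $c^\gamma$ of $\gamma$, set $D^\gamma(j) := \{k \in [p] : c^\gamma(k) < c^\gamma(j)\}$, and introduce the relaxed quantities $\tilde r_j^\gamma := \min_\beta \sfE_*(X_j - X_{D^\gamma(j)}^\T \beta)^2$. Since $\pa^\gamma(j) \subseteq D^\gamma(j)$ one has $r_j^\gamma \ge \tilde r_j^\gamma$, and by the Cholesky representation of \cite{pourahmadi2007cholesky} the $\tilde r_j^\gamma$'s are the squared diagonal entries of the modified Cholesky factor of the $c^\gamma$-permuted covariance matrix of $X$; therefore $\prod_j \tilde r_j^\gamma = \det\{\var(X)\}$. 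Specialising to $\gamma = \gamma^*$ (where $\tilde r_j^{\gamma^*} = \sigma^2$ for each $j$ by the independence just invoked) gives $\det\{\var(X)\} = \sigma^{2p}$, and the arithmetic--geometric mean inequality then yields $\sum_j \tilde r_j^\gamma \ge p\sigma^2$, establishing $r^\gamma \ge r^*$.

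\emph{Sufficient direction.} If $\gamma \supseteq \gamma^*$, then $\pa^*(j) \subseteq \pa^\gamma(j)$, and no $k \in \pa^\gamma(j)$ can be a descendant of $j$ in $\gamma^*$: a $\gamma^*$-path $j \to \cdots \to k$ would combine with the edge $(k,j) \in \gamma$ to form a cycle in $\gamma$. Hence $\epsilon_j$ is independent of $X_{\pa^\gamma(j)}$; extending $\beta^*_j$ by zeros on $\pa^\gamma(j) \setminus \pa^*(j)$ attains residual variance $\sigma^2$, while orthogonality of $\epsilon_j$ to $X_{\pa^\gamma(j)}$ prevents going below, giving $r_j^\gamma = \sigma^2$ for all $j$ and $r^\gamma = r^*$.

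\emph{Necessary direction.} Suppose $r^\gamma = r^*$. Equality in AM--GM forces $\tilde r_j^\gamma = \sigma^2$ for every $j$, and summing $r_j^\gamma \ge \tilde r_j^\gamma$ over $j$ then shows $r_j^\gamma = \sigma^2$ as well. I would first use $\tilde r_j^\gamma = \sigma^2$ to prove by induction along $c^\gamma$ that $c^\gamma$ is itself a topological order of $\gamma^*$: writing the $c^\gamma$-ordered nodes as $j_1, \ldots, j_p$ and assuming $\pa^*(j_s) \subseteq \{j_1, \ldots, j_{s-1}\}$ for all $s < t$, the decomposition $X_{j_t} = X_{\pa^*(j_t)}^\T \beta^*_{j_t} + \epsilon_{j_t}$ with $\epsilon_{j_t}$ orthogonal to $M_t := \mathrm{span}\{X_{j_1}, \ldots, X_{j_{t-1}}\}$ gives
\[
\tilde r_{j_t}^\gamma = \sigma^2 + \Bigl\|\sum_{k \in B_t} \beta^*_{j_t, k}(X_k - P_{M_t} X_k)\Bigr\|^2, \qquad B_t := \pa^*(j_t) \setminus \{j_1, \ldots, j_{t-1}\}.
\]
A leaf-peeling argument on $B_t$—processing its elements in decreasing $\gamma^*$-depth and exploiting that each $X_k$ with $k \in B_t$ contributes a fresh $\epsilon_k$ absent from $M_t$—shows that $\{X_k - P_{M_t} X_k\}_{k \in B_t}$ are linearly independent, so, together with the non-zero SEM assumption, $B_t = \emptyset$. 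Having established the topological order, rerunning the orthogonal decomposition with $\pa^\gamma(j)$ in place of $D^\gamma(j)$ and invoking linear independence of $X_1, \ldots, X_p$ (which follows from $\var(X) \succ 0$) upgrades $r_j^\gamma = \sigma^2$ into $\pa^*(j) \subseteq \pa^\gamma(j)$ for each $j$, so $\gamma \supseteq \gamma^*$.

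\emph{Main obstacle.} The delicate step is the leaf-peeling induction: converting the vanishing Cholesky-type residual into a linear dependence forbidden by the non-zero SEM coefficients requires ordering $B_t$ by $\gamma^*$-depth and tracking the fresh $\epsilon_k$ introduced by each $X_k$, with acyclicity of $\gamma^*$ and positive-definiteness of $\var(X)$ used in tandem.
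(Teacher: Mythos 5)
Your proposal is correct and follows essentially the same route as the paper's proof: bound each $r_j^\gamma$ below by the regression on all nodes preceding $j$ in a topological order of $\gamma$, identify those relaxed quantities with the squared diagonal entries of the Cholesky factor of the permuted covariance (so their product is $\det\{\var(X)\}=\sigma^{2p}$), and finish with the arithmetic--geometric mean inequality. Your treatment of the equality case is somewhat more explicit than the paper's (which asserts the equivalences tersely), and your ``leaf-peeling'' step can be bypassed: positive-definiteness of $\var(X)$ alone already gives linear independence of $\{X_k-P_{M_t}X_k\}_{k\in B_t}$, since a nontrivial vanishing combination would place $\sum_k c_kX_k$ in $M_t$ and hence force a linear dependence among distinct coordinates of $X$.
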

We provide an alternative proof strategy to derive this result, see Appendix. A critical step of this approach involves, for every $\gamma \in \Gamma^p$, bounding each summand by the minimum expected squared errors when for every variable the best linear prediction is based on all variables with lower causal order under $\gamma$. Notably, the latter quantities coincide with the squared diagonal entries in the Cholesky factor of the covariance matrix of the variables permuted under the causal order of $\gamma$, as shown in \cite{pourahmadi2007cholesky}, and this fact is carefully utilised to achieve the desired minimisation.
The above result suggests that the nodewise aggregate of the mean-squared errors, obtained from the least square regressions of every variable upon its parents, is expected to be minimized by the supergraphs of the true DAG. 
Specifically, for every $j \in [p]$, let $X_{j,n} \in \bR^n$ denote the vector of $n$ observations of $X_j$, and $D_{j, n}^\gamma \in \bR^{n \times |\pa^\gamma(j)|}$ denote the data matrix with its rows corresponding to the $n$ observations %$X_{\pa^\gamma(j)}^{(i)}, i \in [n]$. 
of $X_{\pa^\gamma(j)}$,
and for every $\gamma \in \Gamma^p$, let $R_n^\gamma := \sum_{j = 1}^p R_{j,n}^\gamma$, where
\begin{align*}
%V_{j,n} := n^{-1}X_{j,n}^\T X_{j,n}, \qquad 
&R_{j,n}^\gamma := n^{-1}X_{j, n}^\T (I_n - P_{j, n}^\gamma) X_{j,n}, \quad \text{and}\\
&P_{j, n}^\gamma := D_{j,n}^\gamma(D_{j,n}^{\gamma \, \T}D_{j,n}^\gamma)^{-1}D_{j,n}^{\gamma \, \T}, \quad \; j \in [p].
\end{align*}
Then, as a consequence of Theorem \ref{thm:key_result}, it is natural to consider $R_n^\gamma$, which asymptotically equates to $r^\gamma$, as a statistic for graph learning, and employ a model complexity penalty that penalizes the number of edges for scoring graphs. For example, one may consider a Bayesian information criterion (BIC)-type scoring criterion, $n R_n^\gamma + |\gamma| \log n$, which, upon minimising over $\Gamma^p$, potentially leads to $\gamma^*$. Interestingly, in a Bayesian context, if we consider a Gaussian SEM with equal error variances as our {\it working model} and apply $g$-priors on the SEM coefficients, then $R_n^\gamma$ appears inside the Bayes factors, and as a result, with a suitable choice of $g$ they become arbitrarily large in favor of $\gamma^*$ against other DAGs, eventually resulting in the posterior DAG selection consistency even if the true data-generating errors may not be Gaussian, as we formally illustrate in the subsequent sections.

\section{Proposed method}

For a fully Bayesian inference of model \eqref{eq:model}, one would have to specify the error distributions $\sfP^\gamma_j$ for each candidate DAG $\gamma$. We show from an asymptotic perspective that it is safe to simply use Gaussian distributions, which leads to straightforward posterior calculation due to the existence of simple conjugate priors.
%We now propose a family of Bayesian hierarchical models for selecting DAGs in $\Gamma^p$.
Specifically, for any DAG $\gamma \in \Gamma^p$, we consider that the observations $X^{(i)}, \ i \in [n]$ are iid and follow the \textit{Gaussian-error} SEM %$\bM_\gamma$ 
with real SEM coefficient vectors $b_j^\gamma \in \bR^{|\pa^\gamma(j)|}, j \in [p]$, %s $b_{jk}^\gamma, k \in \pa^\gamma(j), \, j \in [p]$, 
and positive variance $\theta^\gamma$, %along with their corresponding prior distributions,
\begin{align}\label{eq:work_model}
    X_j = X_{\pa^\gamma(j)}^\T b_j^\gamma%\sum_{k \in {\rm pa}^\gamma(j)} b_{jk}^\gamma X_{k}
+ e_j^\gamma, \quad e_j^\gamma \;\; {\buildrel\rm ind\over\sim} \;\; \text{N} (0, \theta^\gamma), \quad j \in [p].
\end{align}
We treat the above as our \textit{working model} and emphasize here that the true data-generating errors can be any distribution with mean zero and finite variances.
We impose a DAG-g-prior on the SEM coefficients and the non-informative Jeffreys prior on the error variance: %Regarding the prior distributions, we consider that conditional on $\theta^\gamma$ and $D_{j,n}^\gamma$, the parameters $b_j^\gamma, j \in [p]$ are independent and are distributed as follows,
\begin{align}\label{eq:prior_dist}
\begin{split}
%\bM_\gamma: \quad \qquad \qquad \quad \quad  
%\text{SEM:} \qquad  
%& e_j^\gamma/\theta^\gamma \;\; \overset{\rm iid}{\sim} \;\; \text{N} (0, 1),\\
%\text{coefficient prior:} 
&b_{j}^\gamma \, \big| \, \theta^\gamma, D_{j,n}^\gamma \; \overset{\rm ind}{\sim}  \; \pi_{b, \theta, j}^\gamma(\cdot)= \text{N} (0, g\theta^\gamma(D_{j, n}^{\gamma \,\T} D_{j, n}^\gamma)^{-1}), \\%\pi_{b, j}^\gamma(\cdot),\\
%\text{scale prior:} 
&\qquad \qquad\theta^\gamma \; \sim \;\, \pi_\theta(\cdot)\propto {1}/{\theta^\gamma}.
% &\text{and are independent of the SEM coefficients}.
\end{split}
\end{align}
%where $b_j^\gamma := (b_{jk}^\gamma : k \in \pa^\gamma(j)), \, j \in [p]$.
%where $D_{j, n}^\gamma \in \bR^{n \times |\pa^\gamma(j)|}$ denotes the data matrix with its rows corresponding to the $n$ observations %$X_{\pa^\gamma(j)}^{(i)}, i \in [n]$. 
%of random vector $X_{\pa^\gamma(j)}$.  
 %see Remark \ref{rem:mis_specif} below for further discussions on this point.
Let $b^\gamma := \{b_j^\gamma : j \in [p]\}$, and denote the likelihood function of data by $\ell\lt(D_n\rt | b^\gamma, \theta^\gamma, \gamma)$,
% Thus, the joint density of $X$ under the working model $\bM_\gamma$ in \eqref{eq:lap_model_alt} is 
% \begin{align}\label{eq:pX}
% f^\gamma(x | b^\gamma, \theta^\gamma, \gamma) &= \prod_{j = 1}^p \frac{1}{\theta^\gamma\sqrt{2\pi}} \exp\Bigg(-\frac{1}{2\theta^{\gamma \, 2}} \bigg(x_j - x_{\pa^\gamma(j)}^\T b_j^\gamma\bigg)^2\Bigg),
% \end{align}
% where $x = (x_1, \ldots, x_p) \in \bR^p$.
% Subsequently, by \eqref{eq:pX}, the likelihood function of data %$X^{(i)}, i \in [n]$ 
% $D_n$ is 
% \begin{align}\label{eq:likeli_L}
% \ell\lt(D_n\rt | b^\gamma, \theta^\gamma, \gamma) = \lt(\theta^\gamma\sqrt{2\pi}\rt)^{-np} \exp\Bigg(-\frac{1}{2\theta^{\gamma \, 2}}\sum_{j = 1}^p 
% \sum_{i = 1}^n\bigg(X_j^{(i)} - X_{\pa^\gamma(j)}^{(i) \, \T}b_{j}^\gamma\bigg)^2
% \Bigg).
% \end{align}
which, upon marginalising over $b^\gamma$ and $\theta^\gamma$, leads us to the marginal likelihood or evidence for DAG $\gamma$,
\begin{align}\notag
m\lt(D_n | \gamma\rt)= \int \ell\lt(D_n\rt | b^\gamma, \theta^\gamma, \gamma) &\Big(\prod_{j =1}^p \pi_{b, \theta, j}^\gamma(b_j^\gamma) \, db_{j}^\gamma\Big) \,\\
\label{eq:marg_m}
&\qquad \times \pi_\theta(\theta^\gamma) \,d\theta^\gamma.
\end{align}
Conveniently, $m\lt(D_n | \gamma\rt)$ admits a closed-form expression under our model-prior combination.
%The marginal likelihood or \emph{evidence} is a crucial quantity for Bayesian model selection. 
% The above is a pivotal quantity to assess the Bayesian DAG selection method, and conveniently, it admits a closed-form expression stated in the following result. 
%, where, for every $j \in [p]$, $X_{j,n}$ denotes the vector of $n$ observations of $X_j$. 
%we obtain the marginal likelihood or \emph{evidence} for a DAG $\gamma$ which is a crucial quantity for Bayesian model selection. 

% and the following quantities
% \[
% V_{j,n} := n^{-1}X_{j,n}^\T X_{j,n}, \qquad R_{j,n}^\gamma := n^{-1}X_{j, n}^\T (I_n - P_{j, n}^\gamma) X_{j,n}, \quad j = 1, \ldots, p,
% \]
% and furthermore,
% \[
% V_n := \sum_{j = 1}^p V_{j,n}, \qquad R_n^\gamma := \sum_{j = 1}^p R_{j,n}^\gamma.
% \]

\begin{lemma}\label{lem:marg_gam}
% First, we define the projection matrices
% \[
% P_{j, n}^\gamma := D_{j,n}^\gamma(D_{j,n}^{\gamma \, \T}D_{j,n}^\gamma)^{-1}D_{j,n}^{\gamma \, \T}, \quad j = 1, \ldots, p,
% \]
% which are going to be useful in what follows.
Let $V_n := n^{-1}\sum_{j = 1}^p X_{j,n}^\T X_{j,n}$. %where $V_{j,n} := n^{-1}X_{j,n}^\T X_{j,n}$ for every $j \in [p]$, 
%and for every $\gamma \in \Gamma^p$, let $R_n^\gamma := \sum_{j = 1}^p R_{j,n}^\gamma$, where
% \[
% %V_{j,n} := n^{-1}X_{j,n}^\T X_{j,n}, \qquad 
% R_{j,n}^\gamma := n^{-1}X_{j, n}^\T (I_n - P_{j, n}^\gamma) X_{j,n} \quad \text{with} \quad P_{j, n}^\gamma := D_{j,n}^\gamma(D_{j,n}^{\gamma \, \T}D_{j,n}^\gamma)^{-1}D_{j,n}^{\gamma \, \T}, \qquad \; j \in [p].
% \]
Then for every $\gamma \in \Gamma^p$, we have \;\;
%\begin{equation}
%\label{A1}
\[m(D_n | \gamma) \propto 
%\lt(\sum_{j = 1}^p X_{j,n}^\T(I_n + g(I_n - P_{j, n}^\gamma))X_{j,n}\rt)^{-np/2}
%\Big(\sum_{j = 1}^p (V_{j,n} + gR_{j,n}^\gamma)\Big)^{-np/2}
(V_n + gR_n^\gamma)^{-np/2}
(1 + g)^{(np-|\gamma|)/2}.
\]
%\end{equation}
% where $V_n := \sum_{j = 1}^p V_{j,n}$ and $R_n^\gamma := \sum_{j = 1}^p R_{j,n}^\gamma$ with 
% \[
% V_{j,n} := n^{-1}X_{j,n}^\T X_{j,n}, \qquad R_{j,n}^\gamma := n^{-1}X_{j, n}^\T (I_n - P_{j, n}^\gamma) X_{j,n}, \quad j = 1, \ldots, p,
% \]
%where $V_n := n^{-1}\sum_{j = 1}^p X_{j,n}^\T X_{j,n}$.
\end{lemma}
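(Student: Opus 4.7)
The plan is to carry out the two integrations in \eqref{eq:marg_m} sequentially: first marginalise the SEM coefficients $b^\gamma$ conditional on $\theta^\gamma$, then marginalise $\theta^\gamma$ against the Jeffreys prior. The key observation is that the working model \eqref{eq:work_model} has $p$ independent Gaussian conditionals, so both the likelihood $\ell(D_n \mid b^\gamma, \theta^\gamma, \gamma)$ and the product prior $\prod_j \pi_{b,\theta,j}^\gamma$ factorise over $j \in [p]$. Each node-level integral over $b_j^\gamma$ is a standard conjugate Gaussian-Gaussian computation that yields the marginal $X_{j,n} \mid \theta^\gamma \sim \mathrm{N}(0, \theta^\gamma(I_n + gP_{j,n}^\gamma))$.

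To surface the quantities $V_n$ and $R_n^\gamma$ that appear in the lemma, I would use the identity $(I_n + gP_{j,n}^\gamma)^{-1} = (1+g)^{-1}\bigl(I_n + g(I_n - P_{j,n}^\gamma)\bigr)$, which follows from the idempotence of the projection $P_{j,n}^\gamma$, together with $\det(I_n + gP_{j,n}^\gamma) = (1+g)^{|\pa^\gamma(j)|}$ (the projection has $|\pa^\gamma(j)|$ unit eigenvalues and the rest zero). The $j$-th factor of the $b^\gamma$-marginalised likelihood then becomes proportional to $(\theta^\gamma)^{-n/2}(1+g)^{-|\pa^\gamma(j)|/2}\exp\bigl\{-[X_{j,n}^\T X_{j,n} + g n R_{j,n}^\gamma]/[2\theta^\gamma(1+g)]\bigr\}$. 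Multiplying across $j$ and using $\sum_j|\pa^\gamma(j)| = |\gamma|$, $\sum_j X_{j,n}^\T X_{j,n} = nV_n$, and $\sum_j R_{j,n}^\gamma = R_n^\gamma$ collapses the exponent into $-n(V_n + gR_n^\gamma)/\{2(1+g)\theta^\gamma\}$ and the prefactor into $(2\pi\theta^\gamma)^{-np/2}(1+g)^{-|\gamma|/2}$.

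Finally, combining with $\pi_\theta(\theta^\gamma) \propto 1/\theta^\gamma$ and invoking the inverse-gamma identity $\int_0^\infty \theta^{-a-1}e^{-c/\theta}d\theta = c^{-a}\Gamma(a)$ with $a = np/2$ and $c = n(V_n + gR_n^\gamma)/\{2(1+g)\}$ produces a closed form in which $\Gamma(np/2)$ and the powers of $2\pi$ and $n$ are all free of $\gamma$, so they disappear under the proportionality. Tracking the residual $(1+g)$ powers gives $-|\gamma|/2$ from the $b^\gamma$-integration and $+np/2$ from $c^{-np/2}$, summing to $(np - |\gamma|)/2$; the $(V_n + gR_n^\gamma)$ factor enters with exponent $-np/2$, yielding exactly the stated expression. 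I expect the only substantive step to be the algebraic rearrangement that exposes $R_n^\gamma$ and $V_n$ under a common $1+g$ denominator in the exponent; everything else is routine conjugate-prior bookkeeping.
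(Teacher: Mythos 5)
Your proposal is correct and follows essentially the same route as the paper's proof: conjugate marginalisation of $b_j^\gamma$ to get $X_{j,n}\mid\theta^\gamma \sim \mathrm{N}(0,\theta^\gamma(I_n+gP_{j,n}^\gamma))$, the identity $(I_n+gP_{j,n}^\gamma)^{-1}=(1+g)^{-1}(I_n+g(I_n-P_{j,n}^\gamma))$ with $\det(I_n+gP_{j,n}^\gamma)=(1+g)^{|\pa^\gamma(j)|}$, and an inverse-gamma integral over $\theta^\gamma$. The only cosmetic differences are that you justify the two matrix identities via idempotence and eigenvalues of the projection rather than the Woodbury identity and the matrix determinant lemma, and you integrate out $\theta^\gamma$ after rather than before the algebraic simplification.
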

The proof can be found in the Appendix. Thus, following Lemma \ref{lem:marg_gam}, the Bayes factor in favor of $\gamma$ over any other $\gamma' \in \Gamma^p$, denoted by $\sfBF_n(\gamma, \gamma') := m(D_n | \gamma)/m(D_n | \gamma')$, indeed involves the desired statistics $R_n^\gamma$, as indicated earlier in the previous section.

\section{Consistent DAG selection}

%\section{Posterior consistency}

Now, given a DAG prior $\gamma \sim \pi(\cdot)$ on $\Gamma^p$, the posterior probability of $\gamma$ given data $D_n$ is proportional to the product of the marginal likelihood and the DAG prior probability,
\begin{align}\label{eq:post_prob}
\pi(\gamma | D_n) \; \propto \; m\lt(D_n | \gamma\rt) \times \pi(\gamma). %\;\; \propto \;\; m\lt(D_n | \gamma\rt)\,\exp(-n^{\alpha}|\gamma|).
\end{align} 
% A natural choice of $\pi(\cdot)$ is the uniform prior, i.e., $\pi(\cdot) \; \propto \; 1$; however, somewhat surprisingly,  a non-trivial DAG prior is required to ensure the desired model selection consistency \eqref{eq:goal_pmsc} under certain scenarios, as we show later in Section \ref{sec:asym_prop}.
The following result establishes the desired \textit{posterior DAG selection consistency}, that is, the posterior probability of the true DAG tends to unity in probability, as sample size grows, under a suitable choice of $g$, and any typical non-informative DAG prior, e.g., the uniform prior $\pi (\cdot) \propto 1$.
\begin{theorem}\label{thm:post_const}
     Suppose that $g = n$ and consider any DAG prior $\pi(\cdot)$ such that there exists $C > 0$ satisfying $\pi(\gamma)/\pi(\gamma') \leq C$ for every $\gamma, \gamma' \in \Gamma^p$.
     % \begin{enumerate}
     %     \item $(1 + g) = n$ %\exp(n^\alpha d_n)$
     %     \item $\pi(\gamma) \propto %\exp(-n^\alpha d_n|\gamma|) 
     %     n^{-|\gamma|}
     %     $
     % \end{enumerate}
     Then we have 
     \begin{align*}
         1 - \pi(\gamma^* | D_n) \leq \; &\frac{1}{\surd{n}}\exp(O_p(1))\\
         &+ \exp\lt(-\frac{np}{2}(\delta^* + o_p(1))\rt)(1+n)^{|\gamma^*|/2},
     \end{align*}
     where $\delta^* := \min_{\gamma^* \not\subseteq \gamma} %\log(r^\gamma/r^*) > 0$, 
     \log r^\gamma - \log (p \sigma^2) > 0$,
     and the $O_p$ and $o_p$ statements are under $\sfP^*$. Moreover, if $\gamma^*$ is a complete graph, then the $n^{-1/2}\exp(O_p(1))$ term in the above is dispensable.
\end{theorem}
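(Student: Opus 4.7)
The plan is to upper-bound $1 - \pi(\gamma^* \mid D_n)$ by a finite sum of Bayes factors and then analyse each term via Lemma~\ref{lem:marg_gam}. Using the uniform prior-ratio bound,
\[
1 - \pi(\gamma^* \mid D_n) \;=\; \sum_{\gamma \neq \gamma^*} \pi(\gamma \mid D_n) \;\leq\; C \sum_{\gamma \neq \gamma^*} \sfBF_n(\gamma, \gamma^*),
\]
and setting $g = n$ in Lemma~\ref{lem:marg_gam} gives
\[
\sfBF_n(\gamma, \gamma^*) \;=\; \left(\frac{V_n + n R_n^{\gamma^*}}{V_n + n R_n^\gamma}\right)^{np/2} (1+n)^{(|\gamma^*|-|\gamma|)/2}.
\]
I would then partition $\Gamma^p \setminus \{\gamma^*\}$ into $\cS_1 := \{\gamma : \gamma \supsetneq \gamma^*\}$ and $\cS_2 := \{\gamma : \gamma^* \not\subseteq \gamma\}$, which by Theorem~\ref{thm:key_result} correspond respectively to $r^\gamma = r^*$ and $r^\gamma > r^*$.

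On $\cS_2$, the strong law of large numbers gives $R_n^\gamma \to r^\gamma$, $R_n^{\gamma^*} \to r^*$, and $V_n/n \to 0$ in $\sfP^*$-probability, so dividing numerator and denominator of the ratio by $n$ yields $(V_n + n R_n^{\gamma^*})/(V_n + n R_n^\gamma) = r^*/r^\gamma + o_p(1)$, whose logarithm equals $-\delta_\gamma + o_p(1)$ with $\delta_\gamma := \log r^\gamma - \log r^* \geq \delta^* > 0$. Raising to the $np/2$-th power and bounding $|\gamma^*| - |\gamma| \leq |\gamma^*|$ gives $\sfBF_n(\gamma, \gamma^*) \leq \exp(-(np/2)(\delta^* + o_p(1)))(1+n)^{|\gamma^*|/2}$, and summing the finitely many terms in $\cS_2$ only inflates the $o_p(1)$ in the exponent.

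On $\cS_1$, $r^\gamma = r^*$ forces the leading order of $R_n^\gamma$ to match that of $R_n^{\gamma^*}$, so the required saving must come from $(1+n)^{(|\gamma^*|-|\gamma|)/2} \leq (1+n)^{-1/2}$ and I must show the ratio is $\exp(O_p(1))$. The key observation is that for $\gamma \supseteq \gamma^*$ and any $(k,j) \in \gamma$, adding the edge $(k,j)$ to $\gamma^*$ cannot close a cycle, so $j$ is not an ancestor of $k$ in $\gamma^*$; hence $X_k$ is a function of errors other than $\epsilon_j$, and every column of $D_{j,n}^\gamma$ is independent of $\epsilon_{j,n}$. Using $X_{j,n} = D_{j,n}^{\gamma^*}\beta_j^* + \epsilon_{j,n}$ and the fact that both $P_{j,n}^{\gamma^*}$ and $P_{j,n}^\gamma$ annihilate $D_{j,n}^{\gamma^*}\beta_j^*$ (since the column space of $D_{j,n}^{\gamma^*}$ is contained in that of $D_{j,n}^\gamma$), one obtains
\[
n(R_n^{\gamma^*} - R_n^\gamma) \;=\; \sum_{j=1}^p \epsilon_{j,n}^\T \bigl(P_{j,n}^\gamma - P_{j,n}^{\gamma^*}\bigr) \epsilon_{j,n},
\]
a sum of quadratic forms in i.i.d.\ mean-zero errors whose idempotent kernels have bounded rank and are independent of those errors. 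A conditional second-moment bound then gives $n(R_n^{\gamma^*} - R_n^\gamma) = O_p(1)$, whence $(V_n + nR_n^{\gamma^*})/(V_n + nR_n^\gamma) = 1 + O_p(1/n)$ and its $np/2$-th power equals $\exp(O_p(1))$. Summing the finitely many terms of $\cS_1$ preserves the bound $n^{-1/2}\exp(O_p(1))$. When $\gamma^*$ is complete, no DAG strictly contains it since $\binom{p}{2}$ edges is already maximal, so $\cS_1 = \emptyset$ and this term vanishes.

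The main obstacle is establishing $n(R_n^{\gamma^*} - R_n^\gamma) = O_p(1)$ on $\cS_1$: although the independence identified above reduces it to a conditional quadratic-form calculation, one must verify almost sure invertibility of $D_{j,n}^{\gamma \, \T} D_{j,n}^\gamma$ for large $n$ and rely only on second moments of the errors, since the working Gaussian assumption is not available under $\sfP^*$. The LLN justifications on $\cS_2$ depend on similar regularity of the population covariances but are otherwise routine.
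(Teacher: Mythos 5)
Your proposal is correct and follows the same overall skeleton as the paper's proof: bound $1-\pi(\gamma^*\mid D_n)$ by a sum of Bayes factors weighted by the prior-ratio constant, split $\Gamma^p\setminus\{\gamma^*\}$ into strict supergraphs and non-supergraphs of $\gamma^*$, use the law of large numbers together with Theorem~\ref{thm:key_result} to get the exponential decay $\exp(-(np/2)(\delta_\gamma+o_p(1)))$ on the non-supergraph part, and let the $(1+n)^{(|\gamma^*|-|\gamma|)/2}$ complexity penalty kill the supergraph part. The genuine difference is in how the supergraph term is shown to be $\exp(O_p(1))$: the paper's Lemma~\ref{lem:mis_wilk_shift} invokes Theorem~3.3 of \cite{vuong1989likelihood} on log-likelihood ratios of misspecified nested models (plus the elementary Lemma~\ref{lem:phi_t} to absorb the $V_n$ shift), whereas you give a direct computation, observing that for $\gamma\supseteq\gamma^*$ acyclicity forces $D_{j,n}^\gamma$ to be independent of $\epsilon_{j,n}$, that both residual projections annihilate $D_{j,n}^{\gamma^*}\beta_j^*$, and hence that $n(R_n^{\gamma^*}-R_n^\gamma)=\sum_{j=1}^p\epsilon_{j,n}^\T(P_{j,n}^\gamma-P_{j,n}^{\gamma^*})\epsilon_{j,n}$ is a nonnegative quadratic form with conditional mean $\sigma^2\sum_j(|\pa^\gamma(j)|-|\pa^{\gamma^*}(j)|)\leq \sigma^2 p^2$, so Markov's inequality gives $O_p(1)$ using only second moments. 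Your route is more self-contained and transparent about why only finite variances are needed, at the cost of having to handle the regularity caveats you correctly flag (almost-sure invertibility of $D_{j,n}^{\gamma\,\T}D_{j,n}^\gamma$ for large $n$, which the paper also leaves implicit); the paper's route is shorter but leans on an external theorem whose regularity conditions are not verified in the text. Two cosmetic slips in your write-up, neither fatal: the strong law gives almost-sure convergence rather than convergence in probability (the weak law suffices here, as the paper uses), and the cleanest justification for ``$j$ is not an ancestor of $k$ in $\gamma^*$'' is that a directed path $j\to\cdots\to k$ in $\gamma^*\subseteq\gamma$ together with the edge $(k,j)\in\gamma$ would create a cycle in $\gamma$.
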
 
The proof can be found in the Appendix.
The requirement on the prior $\pi(\cdot)$ is minimal, holding for any DAG prior that assigns strictly positive mass over $\Gamma^p$.  % and does not depend on $n$.
For Gaussian DAGs, \cite{cao2019posterior, lee2019minimax} study selection consistency under the assumption that the true causal order of the variables is {\it known}. \cite{zhou2023complexity} relaxes the latter assumption and with an additional assumption of faithfulness \citep{uhler2013geometry} consistently recovers the Markov equivalence class using a data-dependent prior. In this work, the assumption of Gaussianity is further relaxed, and we establish that even the data-generating DAG can be identified consistently when the associated errors have equal variances, and no other assumption is needed for this purpose.

\section{Simulation study}
In this section, we present a simulation study to illustrate the posterior DAG selection consistency of our proposed method, established in the previous section.
Specifically, we consider $p = 3$, with $\gamma^*$ to be the DAG in Figure \ref{fig:sim_stud1} with the associated data-generating SEM:
\begin{align}\label{eq:sim_stud_SEM1}
\begin{split}
X_1 &= \epsilon_1,\\
X_2 &= 2.7X_1 + \epsilon_2,\\
X_3 &= 1.5X_2 + \epsilon_3,
\end{split}
\end{align}
where the errors are distributed as
$\epsilon_1 \sim \text{Laplace}(0, 1/\sqrt{2})$,
$\epsilon_2 \sim \text{N}(0, 1)$, and
$\epsilon_3 \sim (1/\sqrt{3})\, \text{t}_3$, all having variance $1$.
\begin{figure}[h]
  \centering
    %\begin{minipage}{0.45\textwidth}
        %\centering
        \begin{tikzpicture}[->, >=stealth, thick]
            % Define nodes in an equilateral triangle
            \node (1) [circle, draw, minimum size=1cm] at (1,1.732) {1};
            \node (2) [circle, draw, minimum size=1cm] at (0,0) {2};
            \node (3) [circle, draw, minimum size=1cm] at (2,0) {3};
            % Draw directed edges
            \draw (1) -- (2);
            \draw (2) -- (3);
        \end{tikzpicture}
        % Label below the first figure
        %\\ (a) 
    %\end{minipage}
    \caption{DAG $\gamma^*$.}
    \label{fig:sim_stud1}
\end{figure}

Furthermore, for every $\gamma \in \Gamma^p, j \in [p]$, we consider the prior distributions $\pi_{b, \theta, j}^\gamma(\cdot)$ and $\pi_\theta(\cdot)$ according to \eqref{eq:prior_dist}. Thus, following Theorem \ref{thm:post_const}, employing $g=n$, and the uniform DAG prior $\pi(\cdot) \propto 1$ is sufficient to lead us to the desired posterior DAG selection consistency. 
To illustrate this, or specifically, the asymptotic behavior of the posterior probability of $\gamma^*$, we consider sample sizes $n \in \{100 \times 2^k : k = 4,5,6,7\}$. For each $n$, we generate $100$ independent datasets $D_n$ from the underlying distribution and compute $\pi(\gamma^* | D_n)$
for each replication using the analytically tractable marginal likelihoods from Lemma \ref{lem:marg_gam}.
Indeed, as shown in Figure \ref{fig:boxplot_posterior}, the boxplots of $\pi(\gamma^* | D_n)$shrinks towards $1$, demonstrating the in-probability convergence of $\pi(\gamma^*|D_n)$ as established in Theorem \ref{thm:post_const}.

\begin{figure}[htbp]
    \centering
    \includegraphics[width=0.42\textwidth]{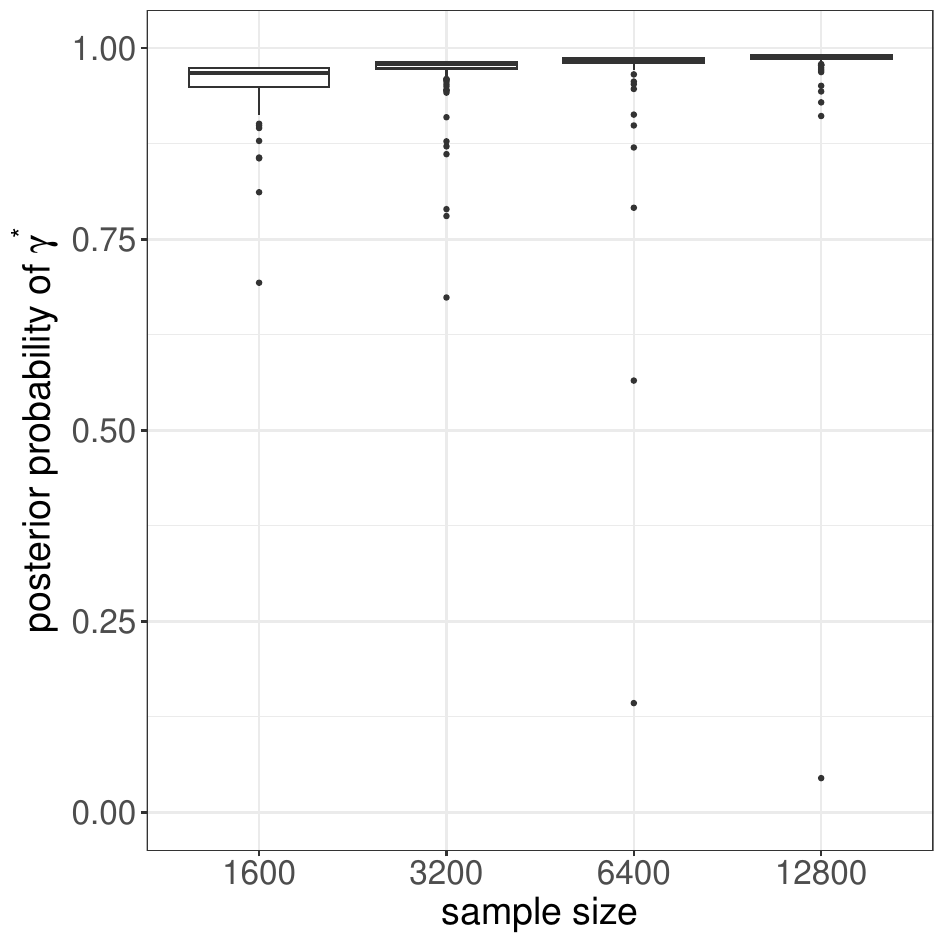}
    \caption{Boxplots of 
    $\pi(\gamma^* \mid D_n)$ over 100 replications for four different sample sizes.}
    \label{fig:boxplot_posterior}
\end{figure}

For additional visualization, see also Figure~\ref{fig:histogram_posterior}, which displays the histogram of $\pi(\gamma^* \mid D_n)$ for $n = 100 \times 2^7$, demonstrating strong posterior concentration near $1$.

\begin{figure}[htbp]
    \centering
    \includegraphics[width=0.42\textwidth]{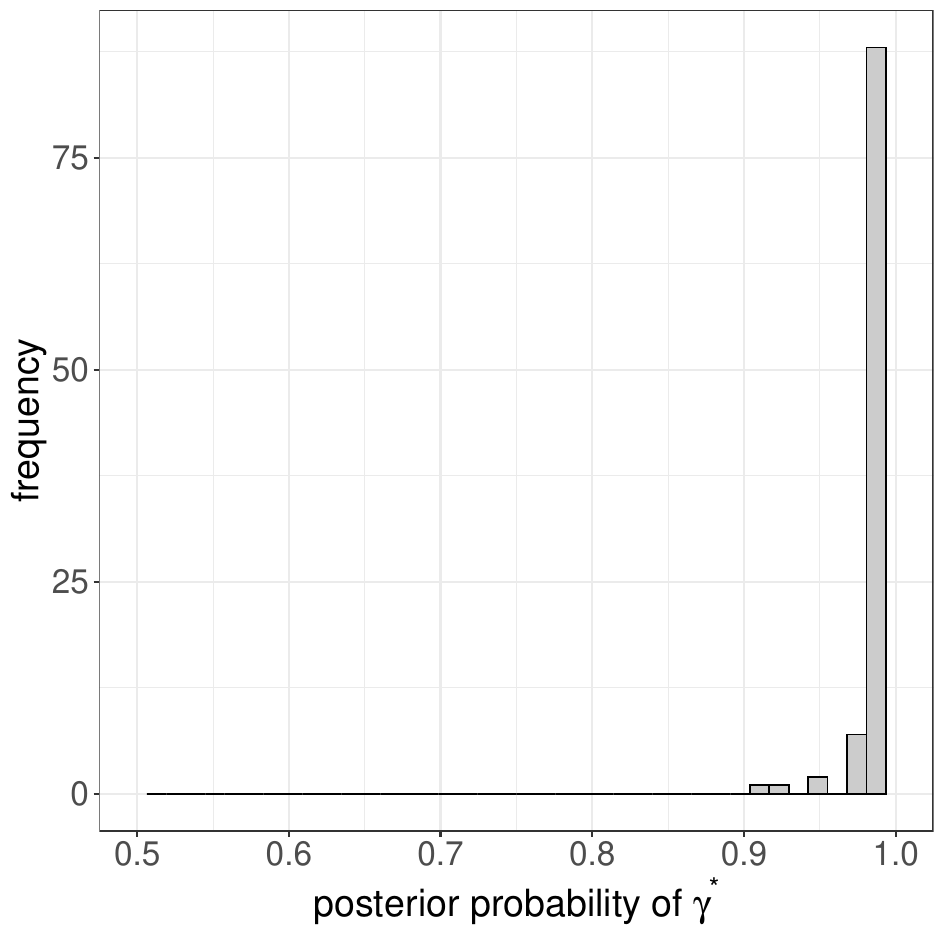}
    \caption{Histogram of 
    $\pi(\gamma^* \mid D_n)$ over 100 replications for sample size $n = 100 \times 2^7$.}
    \label{fig:histogram_posterior}
\end{figure}

\section{Discussion}
In this work, we study Bayesian structure learning for linear recursive SEMs under the equal error variance assumption. To recover the unknown data-generating DAG, we propose a Bayesian SEM with Gaussian errors, assumed to have equal variances, and employ an appropriate $g$-prior to further analyze its theoretical properties under potential model misspecification, with minimal requirement on the DAG prior. We show that the proposed method consistently learns the true DAG, that is, its posterior probability converges to one as the sample size increases, and we simultaneously characterize the corresponding rate of convergence.

Several natural extensions arise from this framework. One direction is to extend the analysis to high-dimensional settings under additional structural assumptions, such as sparsity and suitable tail conditions on the error distributions \citep{johnson2012bayesian}. 
Finally, there are many important open questions for future research in the general context of Bayesian structure learning. One direction is to develop principled and computationally efficient Bayesian DAG selection methods for nonlinear SEMs and establish comparable asymptotic guarantees.
Further extensions include focusing on directed cyclic graphs or non-recursive SEMs, where factorization properties and equivalence characterizations are substantially more complex than in the acyclic case. In addition, the lack of conjugate priors and the resulting intractability of marginal likelihoods complicate theoretical analysis in these broader settings. Finally, incorporating latent confounders or correlated errors presents another important avenue for future work \citep{zhou2022causal, salehkaleybar2020learning, chen2024discovery}.

\begin{acknowledgements} % will be removed in pdf for initial submission,
						 % (without ‘accepted’ option in \documentclass)
                         % so you can already fill it to test with the
                         % ‘accepted’ class option
    The research of A. Chaudhuri and Y. Ni were supported by NIH R01 GM148974. The research of Y. Ni was additionally supported by NSF DMS-2112943.  The research of A. Bhattacharya was supported partially by NSF DMS-2210689 and NSF DMS-1916371.  

    %\emph{All} acknowledgements go in this section.
\end{acknowledgements}

% References
\bibliography{BCDeqv}

\newpage

\onecolumn

\title{Consistent Bayesian causal discovery for\\ structural equation models with equal error variances\\(Supplementary Material)}
\maketitle

%This Supplementary Material should be submitted together with the main paper.

\appendix

\section{Proof of Theorem~\ref{thm:key_result}}
\begin{proof}%[Proof of Theorem~\ref{thm:key_result}]
Due to \eqref{eq:model}, %by following  \cite{mardia2024multivariate} \S 10.2, 
we have $r^*_j = \var(\epsilon_j) = \sigma^2$ for every $j \in [p]$, which implies $r^* = p\sigma^2$. Without loss of generality, suppose the true causal order corresponds to $(1, \ldots, p)$, i.e., $c^*(j) = j$ for every $j \in [p]$. Then the true model in \eqref{eq:model} can be expressed as $X = \cB^*X + \epsilon$, where $\cB^*$ is a lower triangular matrix with all its diagonal elements being $0$, and $\epsilon = (\epsilon_1, \ldots, \epsilon_p)^\T$. Therefore, since $\cov(\epsilon) = \sigma^2I_p$, and $X = (I_p - \cB^*)^{-1}\epsilon$, we have 
\begin{align}\label{eq:cov_X}
    \cov(X) = \sigma^2(I_p - \cB^*)^{-1}((I_p - \cB^*)^{-1})^\T = LL^\T,
\end{align}
where $L := \sigma(I_p - \cB^*)^{-1}$ is the lower triangular Cholesky factor of $\cov(X)$, and thus, regarding its diagonal elements, we have $L_{jj} = \sigma$ for every $j \in [p]$.

Now, fix $\gamma \in \Gamma^p$. Let the corresponding causal order of the variables be $c(\cdot)$, and for every $j \in [p]$, we denote by $\nd^\gamma(j)$ the set of non-descendants of node $j$ in $\gamma$, defined as any node with lower causal order than node $j$,
i.e., $\nd^\gamma(j) = \{k \in [p] : c(k) < c(j)\}$.
Subsequently, for every $j \in [p]$, $c^{-1}(j)$ corresponds to the variable that has causal order $j$, and there exists a permutation matrix $P$ for which $PX = (X_{c^{-1}(1)}, \ldots, X_{c^{-1}(p)})^\T$. Furthermore, let $\cov(PX) = WW^\T$, where $W$ is the corresponding lower-triangular Cholesky factor, and following that, we have
\begin{align}\label{eq:det_cov_PX}
\begin{split}
\prod_{j = 1}^p W_{jj}^2 &= \det(\cov(PX))\\
&= \det(P) \, \det(\cov(X)) \det(P^\T)
\\
&= \det(\cov(X)) = \prod_{j = 1}^p L_{jj}^2 = \sigma^{2p},
\end{split}
\end{align}
where the fourth equality holds due to \eqref{eq:cov_X}.

Furthermore, regarding the diagonal elements of $W$, we have, for every $j \in [p]$,
\begin{align}\label{ineq:W_r}
    W_{jj}^2 = \min_{\beta_j} \ \sfE_*(X_{c^{-1}(j)} - X_{\nd^\gamma(c^{-1}(j))}^\T\beta_j)^2 \leq r_{c^{-1}(j)}^\gamma,
\end{align}
where the equality follows from \cite{pourahmadi2007cholesky} \S 2.2 as $\nd^\gamma(c^{-1}(j)) = \{c^{-1}(k) : k < j\}$, and the inequality holds due to the fact that $\pa^\gamma(c^{-1}(j))\subseteq \nd^\gamma(c^{-1}(j))$.
Therefore, we have
\begin{align*}
    r^\gamma = \sum_{j = 1}^p r^\gamma_j \geq \sum_{j = 1}^p W_{jj}^2 \geq p \Big(\prod_{j = 1}^p W_{jj}^2\Big)^{1/p} = p\sigma^2 = r^*,
\end{align*}
where the first inequality is due to \eqref{ineq:W_r}, the second one follows from the AM-GM inequality, and the second equality holds due to \eqref{eq:det_cov_PX}. Furthermore, $r^\gamma = r^*$ if and only if equality holds in both of the above inequalities. In the second inequality, equality holds if and only if $W_{jj}^2 = \sigma^2$ for every $j \in [p]$, which is equivalent to having $c(j) = j$ for every $j \in [p]$ due to \eqref{eq:model}. Moreover, in the first inequality, equality holds if and only if equality holds in \eqref{ineq:W_r}, which, as $c^{-1}(j) = j$, is equivalent to having $W_{jj}^2 = r^\gamma_j$ for every $j \in [p]$. This in turn holds if and only if $\pa^*(j) \subseteq \pa^\gamma(j)$ for every $j \in [p]$, or equivalently, $\gamma^* \subseteq \gamma$. The proof is complete.
\end{proof}

\section{Proof of posterior DAG selection consistency}
\begin{proof}[Proof of Lemma \ref{lem:marg_gam}]
Fix $\gamma \in \Gamma^p$ and $j \in [p]$. Then, following \eqref{eq:prior_dist}, we have 
\[D_{j, n}b_j^\gamma | D_{j, n}, \theta^\gamma \; \sim \; \text{N} (0, g\theta^\gamma D_{j,n}^\gamma(D_{j, n}^{\gamma \, \T} D_{j, n}^\gamma)^{-1} D_{j,n}^{\gamma \, \T}) \equiv \text{N}(0, g\theta^\gamma P_{j, n}^\gamma).\] 
Furthermore, due to \eqref{eq:work_model}, we have $X_{j, n} = D_{j, n}^\gamma b_j^\gamma + e_{j, n}^\gamma$, where $e_{j, n}^\gamma \sim \text{N} (0, \theta^\gamma I_n)$. Thus, we have
\[
X_{j,n} | D_{j,n}^\gamma, \theta^\gamma \sim \text{N} (0, \theta^\gamma (gP_{j,n}^\gamma + I_n)),
\] 
which incorporates marginalization over $b_j^\gamma$ in \eqref{eq:marg_m}.
Subsequently, by using standard integral to marginalize over $\theta^\gamma$, we have
\begin{align}%\notag
    m(D_n | \gamma) \propto %\int_0^\infty \frac{\exp\lt(-\frac{1}{2\theta}\sum_{j = 1}^pX_{j,n}^\T(gP_{j,n}^\gamma + I_n)^{-1}X_{j,n}\rt)}{\theta^{\frac{np}{2} + 1} \prod_{j = 1}^p \det(gP_{j,n}^\gamma + I_n)^{1/2}} d\theta\\ 
    \label{eq:marg_theta}
     \frac{\lt(\sum_{j = 1}^pX_{j,n}^\T(gP_{j,n}^\gamma + I_n)^{-1}X_{j,n}\rt)^{-\frac{np}{2}}}{\prod_{j = 1}^p \det(gP_{j,n}^\gamma + I_n)^{1/2}}.
\end{align}
Now, we use Woodbury matrix identity \citep{henderson1981deriving} to simplify the numerator in \eqref{eq:marg_theta}, 
\begin{align*}
(gP_{j,n}^\gamma + I_n)^{-1} %&= (gD_{j,n}^\gamma(D_{j,n}^{\gamma \, \T}D_{j,n}^\gamma)^{-1}D_{j,n}^{\gamma \, \T} + I_n)^{-1}\\
&= I_n - gD_{j,n}^\gamma(D_{j,n}^{\gamma \, \T}D_{j,n}^\gamma + gD_{j,n}^{\gamma \, \T}D_{j,n}^\gamma)^{-1}D_{j,n}^{\gamma \, \T}
%=I_n - \frac{g}{1+g}P_{j,n}^\gamma 
= \frac{1}{1+g}(I_n + g(I_n - P_{j,n}^\gamma)).
\end{align*}
For the denominator, we apply the generalised matrix determinant lemma, see \cite{harville1997matrix} \S 18.2, 
\begin{align*}
    \det(gP_{j,n}^\gamma + I_n) %&= \det(gD_{j,n}^\gamma(D_{j,n}^{\gamma \, \T}D_{j,n}^\gamma)^{-1}D_{j,n}^{\gamma \, \T} + I_n)\\
    &= \det(D_{j,n}^{\gamma \, \T}D_{j,n}^\gamma + gD_{j,n}^{\gamma \, \T}D_{j,n}^\gamma) \det((D_{j,n}^{\gamma \, \T}D_{j,n}^\gamma)^{-1})%\\
    %&=(1 + g)^{|\pa^\gamma(j)|}\det(D_{j,n}^{\gamma \, \T}D_{j,n}^\gamma)(\det(D_{j,n}^{\gamma \, \T}D_{j,n}^\gamma))^{-1} 
    = (1 + g)^{|\pa^\gamma(j)|}.
\end{align*}
Substituting the above in \eqref{eq:marg_theta}, 
\begin{align*}
    m(D_n | \gamma) &\propto %\lt(\frac{1}{1+g}\rt)^{-\frac{np}{2}}
    (1+g)^{\frac{np}{2}}
    \frac{\big(\sum_{j = 1}^pX_{j,n}^\T(I_n + g(I_n - P_{j,n}^\gamma))X_{j,n}\big)^{-\frac{np}{2}}}{\prod_{j = 1}^p (1 + g)^{|\pa^\gamma(j)|/2}}\\
    &= %\lt(\frac{1}{1+g}\rt)^{-\frac{np}{2}}
    \Big(\sum_{j = 1}^p (X_{j,n}^\T X_{j,n} + gnR_{j,n}^\gamma)\Big)^{-\frac{np}{2}} (1 + g)^{\frac{np}{2} - \frac{1}{2}\sum_{j = 1}^p|\pa^\gamma(j)|}\\
    &= n^{-np/2} (V_n + gR_n^\gamma)^{-np/2}
(1 + g)^{(np -|\gamma|)/2}.
\end{align*}
This completes the proof.
\end{proof}

\begin{lemma}\label{lem:phi_t}
    For any $a, b > 0$, we have $|\log(a+t) - \log(b+t)| \leq |\log a - \log b|$ for every $t \geq 0$.
\end{lemma}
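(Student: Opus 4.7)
The plan is to fix $a, b > 0$ (without loss of generality $a \geq b$) and treat the quantity inside the absolute value as a function of $t$. Define
\[
\phi(t) := \log(a+t) - \log(b+t), \qquad t \geq 0.
\]
Since $a \geq b$, we have $\phi(t) \geq 0$ for all $t \geq 0$, so the absolute value can be dropped on both sides of the claimed inequality, and it suffices to prove $\phi(t) \leq \phi(0)$ for every $t \geq 0$.

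The key step is to show that $\phi$ is non-increasing on $[0, \infty)$. Differentiating gives
\[
\phi'(t) = \frac{1}{a+t} - \frac{1}{b+t} = \frac{b-a}{(a+t)(b+t)} \leq 0,
\]
since $a \geq b$ and $a+t, b+t > 0$. Hence $\phi$ is monotonically non-increasing, so $\phi(t) \leq \phi(0) = \log a - \log b = |\log a - \log b|$ for every $t \geq 0$. The symmetric case $b > a$ follows by interchanging the roles of $a$ and $b$.

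There is essentially no obstacle here; the only thing to be careful about is keeping track of signs when removing the absolute values, which is handled cleanly by the WLOG reduction. An equivalent one-line argument (which I might use instead for brevity) is to observe that $\phi(t) \leq \phi(0)$ is equivalent to $\tfrac{a+t}{b+t} \leq \tfrac{a}{b}$, which rearranges to $bt \leq at$, i.e., $a \geq b$, which holds by assumption.
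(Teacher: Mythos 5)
Your proof is correct and follows essentially the same route as the paper: both differentiate $\phi(t)=\log(a+t)-\log(b+t)$, obtain $\phi'(t)=(b-a)/((a+t)(b+t))$, and conclude by monotonicity (the paper closes the argument via $\lim_{t\to\infty}\phi(t)=0$, while you use the WLOG $a\ge b$ to keep $\phi\ge 0$; these are interchangeable). No issues.
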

\begin{proof}
    Fix any $a, b > 0$, and let $\phi(t) := \log(a+t) - \log(b+t)$, implying that $\phi'(t) = (b-a)/((a+t)(b+t))$. Thus, $\phi(t)$ is monotone in $t$, and since $\lim_{t \to \infty} \phi(t) = 0$, we have $|\phi(t)| \leq |\phi(0)|$ for every $t \geq 0$, leading to the result.
\end{proof}

In the rest of the paper, we denote $R_n^\gamma$ by $R_n^*$, in particular, when $\gamma = \gamma^*$.
\begin{lemma}\label{lem:mis_wilk_shift}
    If $\gamma^* \subset \gamma$, then $(-np/2)(\log (V_n + nR_n^*) - \log(V_n + nR_n^\gamma)) = O_p(1)$.
\end{lemma}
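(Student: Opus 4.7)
The plan is to reduce the claim, via Lemma \ref{lem:phi_t}, to a tightness statement about nested regression residuals, which is then established by exploiting the independence of the SEM errors from lower-ordered variables in \eqref{eq:model}.

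First, applying Lemma \ref{lem:phi_t} with $a = nR_n^*$, $b = nR_n^\gamma$ and $t = V_n \geq 0$ yields
\[
\bigl|\log(V_n + nR_n^*) - \log(V_n + nR_n^\gamma)\bigr| \;\leq\; \bigl|\log R_n^* - \log R_n^\gamma\bigr|,
\]
so it suffices to prove $n\bigl|\log(R_n^*/R_n^\gamma)\bigr| = O_p(1)$. Since $\gamma \supseteq \gamma^*$, Theorem \ref{thm:key_result} gives $r^\gamma = r^* = p\sigma^2$, and the law of large numbers ensures that $R_n^*$ and $R_n^\gamma$ both converge in probability to $p\sigma^2 > 0$. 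Writing $R_n^*/R_n^\gamma = 1 + (R_n^* - R_n^\gamma)/R_n^\gamma$ and expanding $\log(1+x)$ around $x=0$ then reduces the task to showing $n(R_n^* - R_n^\gamma) = O_p(1)$.

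For this I would work nodewise. Because $\gamma \supseteq \gamma^*$ and both are DAGs, any topological order of $\gamma$ is also one of $\gamma^*$; taking this order to be $(1,\ldots,p)$ without loss of generality gives $\pa^*(j) \subseteq \pa^\gamma(j) \subseteq \{1,\ldots,j-1\}$ for every $j \in [p]$. Hence the column space of $D_{j,n}^*$ sits inside that of $D_{j,n}^\gamma$, so $(P_{j,n}^\gamma - P_{j,n}^*)D_{j,n}^* = 0$. Plugging the true-model identity $X_{j,n} = D_{j,n}^*\beta_j^* + \epsilon_{j,n}$ into
\[
n(R_{j,n}^* - R_{j,n}^\gamma) \;=\; X_{j,n}^\T(P_{j,n}^\gamma - P_{j,n}^*)X_{j,n}
\]
collapses this to $\epsilon_{j,n}^\T(P_{j,n}^\gamma - P_{j,n}^*)\epsilon_{j,n}$.

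The crux is then to observe that the recursive form of \eqref{eq:model} makes $\epsilon_j$ independent of $\{X_k : k < j\}$, so $\epsilon_{j,n}$ is independent of the design matrices that define both projections. Conditioning on the design, the quadratic form has mean $\sigma^2 \, \mathrm{tr}(P_{j,n}^\gamma - P_{j,n}^*) = \sigma^2(|\pa^\gamma(j)| - |\pa^*(j)|)$ and variance of the same $O(1)$ order, hence is $O_p(1)$ by Markov's inequality; summing over $j \in [p]$ gives $n(R_n^* - R_n^\gamma) = O_p(1)$, completing the reduction. The only delicate step is making this independence argument rigorous — it rests squarely on the nestedness $\pa^*(j) \subseteq \pa^\gamma(j)$ together with the shared topological order, both of which are consequences of $\gamma^* \subseteq \gamma$.
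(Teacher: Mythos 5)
Your proof is correct but takes a genuinely different route from the paper's on the key step. Both arguments begin identically, using Lemma \ref{lem:phi_t} to reduce the claim to showing $(np/2)|\log R_n^* - \log R_n^\gamma| = O_p(1)$. The paper then disposes of this in one line by recognising it as the log-likelihood ratio statistic for nested (misspecified) Gaussian working models and citing Theorem 3.3 of Vuong (1989). You instead argue from first principles: Taylor-expanding the log ratio around $R_n^\gamma \to p\sigma^2 > 0$ reduces the problem to $n(R_n^* - R_n^\gamma) = O_p(1)$, and the nodewise identity $n(R_{j,n}^* - R_{j,n}^\gamma) = \epsilon_{j,n}^\T(P_{j,n}^\gamma - P_{j,n}^*)\epsilon_{j,n}$ — valid because $\pa^*(j) \subseteq \pa^\gamma(j)$ makes the projections nested and annihilates the signal term — exposes a nonnegative quadratic form in errors that are independent of the design (your observation that a topological order of $\gamma$ is also one for $\gamma^*$ is exactly what makes this independence rigorous). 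Your approach buys self-containedness: it avoids verifying the regularity conditions of Vuong's theorem for the misspecified working model, and it makes transparent exactly where the supergraph hypothesis enters. One caution: your remark that the conditional \emph{variance} of the quadratic form is $O(1)$ would require finite fourth moments of the errors, which the paper does not assume (only mean zero and finite variance). This is easily repaired and indeed unnecessary — since $P_{j,n}^\gamma - P_{j,n}^*$ is itself an orthogonal projection, the quadratic form is nonnegative with conditional mean $\sigma^2(|\pa^\gamma(j)| - |\pa^*(j)|) = O(1)$, so Markov's inequality applied to the first moment alone already yields tightness. With that phrase deleted or corrected, the argument is complete.
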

\begin{proof}
We observe that the log-likelihood ratio test statistic \citep{wilks1938large}, while testing for model selection between the nested working models given by \eqref{eq:work_model} with corresponding DAGs $\gamma^*$ and $\gamma$, admits the form
$(-np/2)(\log R_n^* - \log R_n^\gamma)$. However, since the models are misspecified, we follow \cite{vuong1989likelihood} Theorem 3.3 to derive that it is $O_p(1)$.
    %From misspecified Wilk's we have $(-np/2)(\log R_n^* - \log R_n^\gamma) = O_p(1)$. 
    Furthermore, we have
    \begin{align*}
        \lt|-\frac{np}{2}(\log (V_n + nR_n^*) - \log(V_n + nR_n^\gamma))\rt| &=  \lt|-\frac{np}{2}(\log (V_n/n + R_n^*) - \log(V_n/n + R_n^\gamma))\rt|\\
        &\leq \lt|-\frac{np}{2} (\log R_n^* - \log R_n^\gamma)\rt| = O_p(1),
    \end{align*}
    where the inequality follows from Lemma \ref{lem:phi_t} as $V_n > 0$ by definition. 
    %Then we can extend this even if $g \to \infty$ for proving the above.
\end{proof}
\begin{proof}[Proof of Theorem \ref{thm:post_const}]
%Fix any $\gamma \in \Gamma^p$.
The %Bayes factor and the 
posterior odds in favor of $\gamma^*$ over any $\gamma \in \Gamma^p$ is denoted by %$\sfBF_n(\gamma, \gamma')$ and 
$\Pi_n(\gamma^*, \gamma)$, i.e., %respectively, i.e., %is defined as,
following \eqref{eq:post_prob}, we have
\begin{align*}%\label{eq:def_BF}
%\sfBF_n(\gamma, \gamma') := \frac{m\lt(D_n | \gamma \; \rt)}{m\lt(D_n | \gamma'\rt)}, \qquad 
\Pi_n(\gamma^*, \gamma) := 
\frac{\pi(\gamma^* |  D_n)}{\pi(\gamma \;\, | D_n)} = %\sfBF_n(\gamma, \gamma') 
%\frac{m\lt(D_n | \gamma^* \rt)}{m\lt(D_n | \gamma \;\,\rt)}
\sfBF_n(\gamma^*, \gamma)
\times \frac{\pi(\gamma^*)}{\pi(\gamma)}.
\end{align*}
Thus, following Lemma \ref{lem:marg_gam} and the above definition,  we have
\begin{align}\label{eq:logBF}
\begin{split}
        %&\; \log \Pi_n(\gamma^*, \gamma)\\
        \log \Pi_n(\gamma^*, \gamma) %= \log m(D_n | \gamma^*) - \log m(D_n | \gamma)\\ %+ \log (\pi(\gamma^*)/\pi(\gamma))\\
        &= -\frac{np}{2}(\log (V_n/n + R_n^*) - \log(V_n/n + R_n^\gamma))\\
        &\qquad \qquad \qquad - \frac{1}{2}(|\gamma^*| - |\gamma|)\log(1+g) + \log (\pi(\gamma^*)/\pi(\gamma)). %+ \log (\pi(\gamma^*)/\pi(\gamma)).
        %&= -(p/2)(\log (v + gr^*) - \log(v + gr^\gamma)) + o_p(1).
        \end{split}
    \end{align}
    Furthermore, we have, by the weak law of large numbers, $V_n \to  \sum_{j = 1}^p \text{var}(X_j)$ in $\sfP^*$-probability and also, following \cite{rao1999linear} \S 2.3, for every $j \in [p]$, $R_{j, n}^\gamma \to r^\gamma_j$ and $R_{j, n}^* \to r^*_j$, again in $\sfP^*$-probability. %$R_n^\gamma \to r^\gamma$ and $R_n^* \to r^*$,
    Now, suppose that $\gamma^* \not\subseteq \gamma$. Then %following \eqref{eq:logBF}
    regarding the first term in the right hand side of \eqref{eq:logBF}, %we have $V_n \to  \sum_{j = 1}^p \text{var}(X_j)$ by the weak law of large numbers, and since, for every $j \in [p]$, $R_{j, n}^\gamma \to r^\gamma_j$ and $R_{j, n}^* \to r^*_j$, following \cite{rao1999linear} \S 2.3, all in $\sfP^*$-probability. $R_n^\gamma \to r^\gamma$ and $R_n^* \to r^*$, Thus, 
    %we obtain
    % \begin{align}\label{eq:def_delta}
    %    \log \lt(\frac{V_n + gR_n^*}{V_n + gR_n^\gamma}\rt)
    %     = -\delta_\gamma + o_p(1), \;\; \text{where} \;\; \delta_\gamma = \begin{cases}
    %     \log (r^\gamma/r^*) \;\; &\text{if } g \to \infty\\
    %     \log ((v/g + r^\gamma)/(v/g + r^*)) &\text{otherwise}
    %     \end{cases}.
    % \end{align}
    \begin{align}\label{eq:def_delta}
       \log (V_n/n + R_n^*) - \log(V_n/n + R_n^\gamma)
        = - \, \delta_\gamma + o_p(1), %\quad \text{where} \quad \delta_\gamma := \log (r^\gamma/r^*).
    \end{align}
    where $\delta_\gamma := \log (r^\gamma/r^*) = \log r^\gamma - \log (p\sigma^2) > 0$ 
    since $r^\gamma > r^* = p\sigma^2$, following from Theorem \ref{thm:key_result}. %we have $\delta_\gamma > 0$. %clearly from above $\log\Pi_n(\gamma^*, \gamma) \to \infty$ in $\sfP^*$-probability.
    % Now, consider $\gamma \in \Gamma^p$ such that $\gamma^* \subset \gamma$. Then
    % \begin{align*}
    %     &\; \log \Pi_n(\gamma^*, \gamma)\\
    %     &=-(np/2)(\log (1 + gR_n^*/V_n) - \log(1 + gR_n^\gamma/V_n)) - (1/2)(|\gamma^*| - |\gamma|)\log(1+g) + \log (\pi(\gamma^*)/\pi(\gamma))\\
    %     &= O_p(1) - (1/2)(|\gamma^*| - |\gamma|)\log(1+g) + \log (\pi(\gamma^*)/\pi(\gamma)).
    % \end{align*}
    % Therefore, again from ... , $\log\Pi_n(\gamma^*, \gamma) \to \infty$ in $\sfP^*$-probability. Thus, following Lemma \ref{lem:PrE}, the proof is complete.
    
Again, we have
    \begin{align*}
\pi(\gamma^* | D_n) &= \frac{m\lt(D_n \rt | \gamma^*) \pi_g(\gamma^*)}{\sum_{\gamma \in \Gamma^p}m\lt(D_n \rt | \gamma)  \pi_g(\gamma)} = \frac{1}{\sum_{\gamma \in \Gamma^p} \Pi_n(\gamma^*, \gamma)^{-1}} = \frac{1}{1 + \sum_{\gamma \neq \gamma^*} \Pi_n(\gamma^*, \gamma)^{-1}},
%&= \frac{1}{\Pi_n(\gamma^*, \gamma) + \sum_{\gamma' \neq \gamma^*} \Pi_n(\gamma', \gamma)} \quad \to \; 0, \quad \text{in} \;\; \sfP^*\text{-probability},
\end{align*}
which leads to
    \begin{align*}
        &1 - \pi(\gamma^* | D_n) = \frac{\sum_{\gamma \neq \gamma^*} \Pi_n(\gamma^*, \gamma)^{-1}}{1 + \sum_{\gamma \neq \gamma^*} \Pi_n(\gamma^*, \gamma)^{-1}}\\
        &\leq \sum_{\gamma \neq \gamma^*} \Pi_n(\gamma^*, \gamma)^{-1} = \sum_{\gamma^* \subset \gamma} \Pi_n(\gamma^*, \gamma)^{-1} + \sum_{\gamma^* \not\subseteq \gamma} \Pi_n(\gamma^*, \gamma)^{-1}\\
        &= \sum_{\gamma^* \subset \gamma} \exp(O_p(1))  (1 + g)^{(|\gamma^*| - |\gamma|)/2} %(\pi(\gamma)/\pi(\gamma^*))\\
        \frac{\pi(\gamma)}{\pi(\gamma^*)}
        + \sum_{\gamma^* \not\subseteq \gamma} \exp(-n(p\delta_\gamma/2 + o_p(1))) (1 + g)^{(|\gamma^*| - |\gamma|)/2} %(\pi(\gamma)/\pi(\gamma^*))
        \frac{\pi(\gamma)}{\pi(\gamma^*)}\\
        &\leq \exp(O_p(1))\sum_{\gamma^* \subset \gamma} (1 + n)^{(|\gamma^*| - |\gamma|)/2} + \sum_{\gamma^* \not\subseteq \gamma} \exp(-n(p\delta_\gamma/2 + o_p(1))) (1 + n)^{(|\gamma^*| - |\gamma|)/2}\\
        &\leq \exp(O_p(1)) \; n^{-1/2} + \exp(-(np/2)(\delta^* + o_p(1)))(1+n)^{|\gamma^*|/2}.
    \end{align*}
In the above, the third equality follows from \eqref{eq:logBF}, \eqref{eq:def_delta} and Lemma \ref{lem:mis_wilk_shift}. The last inequality follows from the fact that for every $\gamma \supset \gamma^*$, $|\gamma^*| - |\gamma| \leq -1$, the definition of $\delta^*$ and the fact that $|\gamma| \geq 0$. In particular, when $\gamma^*$ is a complete graph, there is no DAG $\gamma \supset \gamma^*$, and consequently, the first term in the right hand side does not appear in the calculation. The proof is complete. 
\end{proof}
% \section{Math font exposition}
% % NOTE: necessary when ptmx or no mathfont class option is given
% \providecommand{\upGamma}{\Gamma}
% \providecommand{\uppi}{\pi}
% How math looks in equations is important:
% \begin{equation*}
%     F_{\alpha,\beta}^\eta(z) = \upGamma(\tfrac{3}{2}) \prod_{\ell=1}^\infty\eta \frac{z^\ell}{\ell} + \frac{1}{2\uppi}\int_{-\infty}^z\alpha \sum_{k=1}^\infty x^{\beta k}\mathrm{d}x.
% \end{equation*}
% However, one should not ignore how well math mixes with text:
% The frobble function \(f\) transforms zabbies \(z\) into yannies \(y\).
% It is a polynomial \(f(z)=\alpha z + \beta z^2\), where \(-n<\alpha<\beta/n\leq\gamma\), with \(\gamma\) a positive real number.

\end{document}